\numberwithin{equation}{section}
\useunder{\uline}{\ulined}{}
\theoremstyle{plain}
\newtheorem{defn}{Definition}[section]
\newtheorem{thm}[defn]{Theorem}
\newtheorem{lem}[defn]{Lemma}
\newtheorem{cor}[defn]{Corollary}
\newtheorem{prop}[defn]{Proposition}
\newtheorem*{thma}{Theorem}
\theoremstyle{definition}
\newtheorem{ex}[defn]{Example}
\theoremstyle{remark}
\newcommand{\Hom}{\mathrm{Hom}}
\newcommand{\Aut}{\mathrm{Aut}}
\mathchardef\mhyphen="2D
\newcommand{\Z}{\mathbb{Z}}
\newcommand{\Q}{\mathbb{Q}}
\newcommand{\GL}{\mathrm{GL}}
\newcommand{\quot}[2]{{\raisebox{.1em}{$#1$\!}\left/\raisebox{-.2em}{$#2$}\right.}}
\newcommand{\lra}{\longrightarrow}
\newcommand{\ra}{\rightarrow}
\newcommand{\langl}{\begin{picture}(4.5,7)
\put(1.1,2.5){\rotatebox{60}{\line(1,0){5.5}}}
\put(1.1,2.5){\rotatebox{300}{\line(1,0){5.5}}}
\end{picture}}
\newcommand{\rangl}{\begin{picture}(4.5,7)
\put(.9,2.5){\rotatebox{120}{\line(1,0){5.5}}}
\put(.9,2.5){\rotatebox{240}{\line(1,0){5.5}}}
\end{picture}}
\newcommand\restr[2]{{
  \left.\kern-\nulldelimiterspace 
  #1 
  \vphantom{\big|} 
  \right|_{#2} 
  }}
\begin{document}
\title[Endomorphisms of nilpotent groups of finite rank]{Endomorphisms of nilpotent groups of finite rank}
\author{Hector Durham}
\address{Mathematical Sciences, University of Southampton, SO17 1BJ, UK}
\email{H.Durham@soton.ac.uk}
\keywords{Endomorphism, Nilpotent group}
\begin{abstract}
We obtain sufficient criteria for endomorphisms of torsion-free nilpotent groups of finite rank to be automorphisms, by considering the induced maps on the torsion-free abelianisation and the centre.
Whilst these results are known in the finitely generated case removing this assumption introduces several difficulties.
\end{abstract}
\maketitle

\section{Introduction}

Suppose an endomorphism $\sigma$ of a torsion-free finitely generated nilpotent group $N$ induces an isomorphism on the centre.
It is known by (independent) work of Farkas \cite{Farkas} and Wehrfritz \cite{Wehrfritz I} that $\sigma$ is necessarily an automorphism.
Meanwhile Wehrfritz [ibid.] demonstrates that for $N$ of finite (Pr\"ufer) rank, still torsion-free and nilpotent, the result is false in general.
For $\pi$ a set of rational primes, we obtain sufficient criteria in the $\pi$-divisible case by considering a generalisation of integer-like endomorphisms to so-called $\pi$-like endomorphisms.
Invertible integer-like endomorphisms (the case $\pi=\emptyset$) are discussed in the context of nilpotent groups in \cite{Dekimpe Dere}.
The integer-like endomorphisms are those that preserve a maximal rank torsion-free finitely generated abelian group in the Lie algebra of the Mal'cev completion.
As a special case of theorem (\ref{thm: central main theorem}), we then have the following.
\begin{thma}
Let $N$ be torsion-free nilpotent of finite rank, and $\sigma$ an integer-like endomorphism with $\det\restr{\sigma}{Z(N)}=\pm1$.
Then $\sigma$ is an automorphism of $N$.
\end{thma}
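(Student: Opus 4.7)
The plan is to pass, via the Mal'cev correspondence, to the finite-dimensional nilpotent $\Q$-Lie algebra $\mathfrak{g} = \log N^{\Q}$ of the Mal'cev completion, exploit nilpotency and the centre hypothesis to show the induced endomorphism is a Lie algebra automorphism, and then transfer back to a group automorphism of $N$ by a Cayley--Hamilton argument. First, $\sigma$ extends uniquely to $\hat\sigma \in \End(N^{\Q})$ and via $\log/\exp$ corresponds to a Lie algebra endomorphism $\tilde\sigma$ of $\mathfrak{g}$; the integer-like hypothesis supplies a full-rank $\Z$-lattice $\mathfrak{g}_{\Z} \subseteq \mathfrak{g}$ preserved by $\tilde\sigma$. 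Identifying $Z(\mathfrak{g}) = Z(N) \otimes_{\Z} \Q$, the centre hypothesis becomes $\det\tilde\sigma|_{Z(\mathfrak{g})} = \pm 1$, so $\tilde\sigma$ restricts to an automorphism of the sublattice $Z(\mathfrak{g}) \cap \mathfrak{g}_{\Z}$.

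Next, $\ker\tilde\sigma$ is an ideal of $\mathfrak{g}$; since in a finite-dimensional nilpotent Lie algebra every non-zero ideal meets the centre non-trivially, injectivity of $\tilde\sigma|_{Z(\mathfrak{g})}$ forces $\ker\tilde\sigma = 0$. Thus $\tilde\sigma \in \Aut(\mathfrak{g})$, so $\hat\sigma \in \Aut(N^{\Q})$ and $\sigma$ is injective on $N \hookrightarrow N^{\Q}$. For surjectivity on the centre, Cayley--Hamilton applied to the integer matrix $\tilde\sigma|_{Z(\mathfrak{g}) \cap \mathfrak{g}_{\Z}} \in \GL(Z(\mathfrak{g}) \cap \mathfrak{g}_{\Z})$ expresses $\tilde\sigma^{-1}|_{Z(\mathfrak{g})}$ as a $\Z$-polynomial $P$ in $\tilde\sigma|_{Z(\mathfrak{g})}$. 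For $z \in Z(N)$, the element $P(\tilde\sigma)(\log z)$ is a $\Z$-linear combination of $\log\sigma^{k}(z)$ inside the abelian group $(Z(\mathfrak{g}),+)$; since $\exp$ is additive on the centre, this translates to a finite product of integer powers of $\sigma^{k}(z) \in Z(N)$, hence lies in $Z(N)$. Thus $\sigma^{-1}(z) \in Z(N)$, so $\sigma|_{Z(N)} \in \Aut(Z(N))$.

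To propagate surjectivity from $Z(N)$ to $N$ itself I would induct on the nilpotency class. The induced $\bar\sigma$ on the quotient $N/Z(N)$ (of strictly smaller class) is still integer-like via the quotient lattice $\mathfrak{g}_{\Z}/(Z(\mathfrak{g}) \cap \mathfrak{g}_{\Z})$; combining $\sigma|_{Z(N)} \in \Aut(Z(N))$ with an inductive $\bar\sigma \in \Aut(N/Z(N))$ via the short exact sequence $1 \to Z(N) \to N \to N/Z(N) \to 1$ yields $\sigma \in \Aut(N)$. The main obstacle—and where the integer-like hypothesis does its essential work—is re-establishing the central determinant condition on $Z(N/Z(N)) = Z_{2}(N)/Z(N)$ for the inductive step, since $\det\sigma|_{Z(N)} = \pm 1$ does not automatically descend to this subquotient. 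I expect this to be handled by applying the Cayley--Hamilton argument along the entire upper central series $0 \subsetneq Z(\mathfrak{g}) \subsetneq Z_{2}(\mathfrak{g}) \subsetneq \cdots \subsetneq \mathfrak{g}$ at the level of the integer-like lattice filtration, a device that replaces the finite-generation ingredient underlying the classical Farkas--Wehrfritz argument.
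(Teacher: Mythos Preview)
Your setup through the surjectivity of $\sigma$ on $Z(N)$ is fine, and the Cayley--Hamilton manoeuvre there is a pleasant alternative to the paper's route (which stabilises a full-rank sublattice and invokes the rigidity proposition for torsion-free abelian groups of finite rank). The structure of the induction on the upper central series is also the same as in the paper.

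However, the proof has a genuine gap exactly where you flag it: you must show that $\det\bigl(\bar\sigma\,\big|\,Z_2(N)/Z(N)\bigr)=\pm1$, and your proposed fix---``Cayley--Hamilton along the upper central series at the level of the integer-like lattice filtration''---does not do this. From the lattice $\mathfrak{g}_\Z$ you only learn that the characteristic polynomial of $\tilde\sigma$ on each factor $Z_{i}(\mathfrak g)/Z_{i-1}(\mathfrak g)$ has integer coefficients; you get no control on the individual determinants. Cayley--Hamilton on such a factor would express $\tilde\sigma^{-1}$ as a polynomial in $\tilde\sigma$ \emph{only after} you already know the constant term is $\pm1$, which is precisely what is at issue. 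Nothing in your outline produces a lattice in $Z_2(\mathfrak g)/Z(\mathfrak g)$ that is preserved by $\tilde\sigma^{-1}$.

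The paper supplies the missing idea: the commutator map gives an equivariant embedding
\[
\beta_1\colon Z^{2}(N)/Z(N)\;\lmono\;\Hom\bigl(N/N',\,Z(N)\bigr),
\]
and the $\sigma_\times$-action on the target is by $\theta\mapsto \sigma\circ\theta\circ\sigma^{-1}$. Because $\sigma$ is integer-like on $N/\Gamma_2(N)$ and, thanks to the determinant hypothesis, $\sigma^{-1}$ is integer-like on $Z(N)$, one builds a full-rank free abelian subgroup $H_{S,T}$ of the $\Hom$-group preserved by $\sigma_\times^{-1}$. Passing to the Mal'cev completion and applying Gauss' lemma to the $\Q$-span $U$ of the image of $\beta_1$ then shows that \emph{both} $\det\sigma_\times$ and $\det\sigma_\times^{-1}$ on $U\cong Z^{2}/Z^{1}\otimes\Q$ are integers, forcing the value $\pm1$. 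This step---exploiting the $\Hom$-description of the second upper-central quotient to transfer integrality from $\sigma$ on the abelianisation and from $\sigma^{-1}$ on the centre---is the heart of the argument, and it is absent from your proposal.
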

We give counterexamples (see \ref{ex:counterexamples}) to show that one must have a condition on the determinant and furthermore that some version of integer-like must be considered.
Meanwhile Farkas and Wehrfritz, again independently, show that an endomorphism of a polycyclic-by-finite group which induces an isomorphism on the Zaleskii subgroup is an automorphism - see also the later paper of Wehrfritz \cite{Wehrfritz II} in this context.
We give an example (see \ref{ex:wehrfritz does not generalise}) to show that this cannot be generalised to finitely generated minimax groups.

On the other hand, we show (theorem (\ref{thm: tfab theorem})) that the torsion-free abelianisation always detects the surjectivity of an endomorphism, a more straightforward result.
This is the content of the first section of the paper, since the tools are required later.

\section{Detecting Surjectivity on the Torsion-Free Abelianisation}
The following proposition is standard, but we give here a precise formulation for our specific application later.
For any group $G$ and $i\geqslant1$, denote by $\gamma_i(G)$ the $i$-th term of its lower central series, and $\Gamma_i(G)$ its isolator, that is, the preimage of the torsion subgroup of $G/\gamma_i(G)$.
As usual we often write $G'=\gamma_2(G)$.
We say that an endomorphism $\sigma$ preserves $H$ if $\sigma(H)\leqslant H$ and stabilises $H$ if $\sigma(H)=H$. 

\begin{lem}[{\cite[1.2.11]{LR}}]\label{lem: tensor power maps}
Let $N$ be a torsion-free nilpotent group and $\sigma$ an endomorphism of $N$.
For each $i\geqslant1$, commutation in $N$ induces a $\Z[\sigma]$-module homomorphism 
\begin{align*}
\alpha_i\colon\big(\quot{N}{\Gamma_2}(N)\big)^{\otimes i}&\lra\quot{\Gamma_i(N)}{\Gamma_{i+1}}(N),\\
\bar x_1\otimes\cdots\otimes \bar x_i&\longmapsto\overline{[x_1,\dots,x_i]},
\end{align*}
where we equip the tensor power with the diagonal action.
Moreover, the image of $\alpha_i$ is precisely $$\quot{\gamma_i(N)\Gamma_{i+1}(N)}{\Gamma_{i+1}(N)}.$$
\end{lem}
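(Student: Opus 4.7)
The plan is to construct $\alpha_i$ by first producing the $\Z$-multilinear map $c\colon N^i \to \Gamma_i(N)/\Gamma_{i+1}(N)$ given by $(x_1,\dots,x_i)\mapsto \overline{[x_1,\dots,x_i]}$, then checking that it annihilates $\Gamma_2(N)$ in each coordinate, and finally invoking the universal property of the tensor product to descend to $(N/\Gamma_2(N))^{\otimes i}$. The $\Z[\sigma]$-equivariance would then be automatic: $\sigma$ preserves every $\gamma_k(N)$, hence every isolator $\Gamma_k(N)$, and the identity $\sigma([x_1,\dots,x_i]) = [\sigma(x_1),\dots,\sigma(x_i)]$ intertwines the diagonal action on the tensor power with the action on the target.

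Multilinearity of $c$ I would check via the identity $[ab,z]=[a,z]^b[b,z]$ applied in the first slot, together with its analogue in the second slot. At weight $i$ the outer commutator $[a,z]$ already lies in $\gamma_i(N)$, so the conjugate $[a,z]^b = [a,z]\,[[a,z],b]$ differs from $[a,z]$ only by an element of $\gamma_{i+1}(N)\subseteq\Gamma_{i+1}(N)$; thus after projecting to $\Gamma_i(N)/\Gamma_{i+1}(N)$ the identity collapses to additivity, and the same reasoning propagates into each of the $i$ positions of a left-normed iterated commutator.

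To descend through $\Gamma_2(N)$, suppose some $x_j$ lies in $\Gamma_2(N)$, so that $x_j^n \in \gamma_2(N)$ for some $n\geqslant 1$. By the multilinearity just established, $c(x_1,\dots,x_i)^n = c(x_1,\dots,x_j^n,\dots,x_i)$, and inserting a commutator $[a,b]\in\gamma_2(N)$ into slot $j$ of a weight-$i$ iterated commutator lands in $\gamma_{i+1}(N)$ by the usual weight bookkeeping; hence $c(x_1,\dots,x_i)^n = 0$ in the quotient. Since $\Gamma_i(N)/\Gamma_{i+1}(N)$ is torsion-free by construction of the isolator, $c(x_1,\dots,x_i)=0$, which produces the factorisation and hence $\alpha_i$. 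The image is generated by cosets of iterated commutators of weight $i$, giving precisely $\gamma_i(N)\Gamma_{i+1}(N)/\Gamma_{i+1}(N)$. The main delicate point, compared with the more familiar assertion modulo $\gamma_{i+1}(N)$, is descending through $\Gamma_2$ rather than $\gamma_2$, which is exactly where torsion-freeness of $\Gamma_i/\Gamma_{i+1}$ is used.
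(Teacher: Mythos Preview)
Your argument is correct: the multilinearity modulo $\gamma_{i+1}$, the descent through $\Gamma_2$ via torsion-freeness of $\Gamma_i/\Gamma_{i+1}$, the $\sigma$-equivariance, and the identification of the image are all sound. The paper itself offers no proof beyond pointing to \cite[1.2.11]{LR} and declaring the isolator version a standard generalisation, so you have simply written out precisely the verification the paper leaves to the reader.
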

\begin{proof}
We remark only that the formulation here is a standard generalisation of the cited result.
\end{proof}
We now prove here a certain rigidity result for torsion-free abelian groups of finite rank.
It is presumably well-known, but we are unaware of a reference.

We begin with the following elementary lemma.
Notation: for an abelian group $A$, and natural number $l\geqslant0$, set $A[l]$ to be the subgroup of $A$ consisting of elements of order dividing $l$.
\begin{lem}\label{lem:part1 abelian group lemma}
Let $p$ be a prime, and $T$ a $p$-torsion abelian group with $T[p]$ finite.
Then every injective endomorphism of $T$ is surjective.
\end{lem}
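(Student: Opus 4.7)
The plan is to reduce the problem to a union of finite subgroups on which an injective endomorphism is automatically surjective by pigeonhole.

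First, I would show by induction on $n$ that $T[p^n]$ is finite for every $n\geqslant1$. The base case $n=1$ is the hypothesis. For the inductive step, consider the multiplication-by-$p$ homomorphism $\mu_p\colon T[p^n]\ra T[p^{n-1}]$; this is well-defined (if $p^n x=0$ then $p^{n-1}(px)=0$) and has kernel exactly $T[p]$. Thus $T[p^n]$ sits in a short exact sequence
$$0\lra T[p]\lra T[p^n]\lra \mu_p(T[p^n])\lra0,$$
with $T[p]$ finite by hypothesis and $\mu_p(T[p^n])\leqslant T[p^{n-1}]$ finite by induction, hence $T[p^n]$ is finite.

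Next, for any endomorphism $\sigma$ of $T$, each subgroup $T[p^n]$ is $\sigma$-invariant, since $p^n\sigma(x)=\sigma(p^nx)=0$ whenever $p^nx=0$. If $\sigma$ is injective, then $\sigma$ restricts to an injective endomorphism of the finite group $T[p^n]$, which is therefore a bijection onto $T[p^n]$. In particular $T[p^n]\subseteq\sigma(T)$.

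Finally, since $T$ is $p$-torsion we have $T=\bigcup_{n\geqslant1}T[p^n]$, and combining with the previous step gives $T\subseteq\sigma(T)$, i.e.\ $\sigma$ is surjective. The only non-formal step is the finiteness of $T[p^n]$, which is easily handled by the short exact sequence above; the rest is essentially pigeonhole on finite groups and a direct limit argument.
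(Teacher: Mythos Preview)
Your proof is correct and follows essentially the same approach as the paper: both show by induction that each $T[p^{n}]$ is finite via the multiplication-by-$p$ map (the paper phrases this as $T[p^{i+1}]$ being an extension of $T[p]$ by $pT[p^{i+1}]\leqslant T[p^{i}]$), then use that the $T[p^{n}]$ are fully invariant and exhaust $T$ to conclude by pigeonhole. Your write-up is slightly more explicit but otherwise identical in structure.
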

\begin{proof}
Let $\sigma$ be an injective endomorphism of $T$.
Since $T=\bigcup_{i\geqslant0}T[p^{i}]$ and these are fully invariant subgroups, it suffices to show that the restriction of $\sigma$ to $T[p^{i}]$ is surjective for each $i$.
Since $\sigma$ is injective, it will suffice to show that these subgroups are all finite.
The case $i=1$ is our hypothesis, so assume that $T[p^{i}]$ is finite and note that $T[p^{i+1}]$ is an extension of $T[p]$ by $pT[p^{i+1}]\leqslant T[p^{i}]$.
Conclude.
\end{proof}
We may now show
\begin{prop}\label{prop:part1 abelian group prop}
Let $A$ be a torsion-free abelian group of finite rank and $B$ a subgroup of the same rank.
Suppose $\sigma$ is an endomorphism of $A$ which stabilises $B$.
Then $\sigma\in\Aut(A)$.
\end{prop}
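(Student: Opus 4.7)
The plan is to reduce the statement to an application of Lemma \ref{lem:part1 abelian group lemma} applied to the endomorphism $\bar\sigma$ induced by $\sigma$ on the torsion quotient $A/B$.

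First I would establish that $\sigma$ is injective. As $B$ is torsion-free of finite rank and $\sigma(B)=B$, tensoring $\sigma|_B$ with $\mathbb{Q}$ gives a surjective self-map of a finite-dimensional $\mathbb{Q}$-vector space, which is therefore a bijection; hence $\sigma|_B$ is injective. For $\sigma$ itself, $A/B$ is torsion (since $A$ and $B$ share the same rank), so any $a\in\Ker\sigma$ satisfies $na\in B$ for some $n\geqslant1$ with $\sigma(na)=0$, forcing $na=0$ and hence $a=0$. The induced map $\bar\sigma\colon A/B\to A/B$ is then also injective: $\sigma(a)\in B=\sigma(B)$ gives $\sigma(a)=\sigma(b)$ for some $b\in B$, whence $a=b\in B$. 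The objective becomes to show $\bar\sigma$ is surjective, since then $\sigma(A)+B=A$, and combined with $B=\sigma(B)\subseteq\sigma(A)$ this yields $\sigma(A)=A$.

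The main obstacle --- and the point at which the finite rank hypothesis is essential --- is to prove the $p$-socle $(A/B)[p]$ is finite for every prime $p$. Setting $r=\rank A$ and $C=\{a\in A:pa\in B\}$, the inclusions $pC\leqslant B\leqslant C$ give a surjection $C/pC\twoheadrightarrow C/B=(A/B)[p]$. Since any $r+1$ elements of $C$ admit a $\mathbb{Z}$-linear relation with coprime coefficients --- obtained by clearing denominators in a $\mathbb{Q}$-linear dependence inside $A\otimes\mathbb{Q}$, which has dimension $r$ --- reducing such a relation mod $p$ yields a nontrivial $\mathbb{F}_p$-relation; hence $\dim_{\mathbb{F}_p}C/pC\leqslant r$, and in particular $(A/B)[p]$ is finite.

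With this in hand, the proof concludes quickly: decomposing the torsion abelian group $A/B$ as the direct sum of its $p$-primary components $T_p$, each stable under $\bar\sigma$ and each with socle $T_p[p]=(A/B)[p]$ finite, Lemma \ref{lem:part1 abelian group lemma} applies to give surjectivity of $\bar\sigma|_{T_p}$, and summing over primes shows $\bar\sigma$ is surjective on $A/B$, as required.
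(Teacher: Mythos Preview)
Your proof is correct and follows essentially the same route as the paper: reduce to the induced map on the torsion quotient $A/B$, check injectivity there, split into primary components, and apply Lemma~\ref{lem:part1 abelian group lemma}. The paper invokes the five lemma and asserts that Lemma~\ref{lem:part1 abelian group lemma} applies ``due to our finiteness of rank assumption'', whereas you spell out both the injectivity of $\sigma$ and the finiteness of $(A/B)[p]$ explicitly via the $C/pC$ argument; these are the same steps, just with the details filled in.
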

\begin{proof}
By the five lemma, it will suffice to show that the induced endomorphism on $A/B$ is surjective.
One verifies immediately from the necessary injectivity of $\sigma$ on $A$ that is also injective on this quotient.
Moreover, decomposing the torsion group $A/B$ into its primary components, we may assume that $A/B$ is $p$-torsion for some prime $p$.
Lemma (\ref{lem:part1 abelian group lemma}) now applies due to our finiteness of rank assumption.
\end{proof}
We are now in a position to prove
\begin{thm}\label{thm: tfab theorem}
Let $N$ be torsion-free nilpotent of finite Pr\"ufer rank, and $\sigma$ an endomorphism of $N$ with $\sigma(N)\Gamma_2(N)=N$.
Then $\sigma\in\Aut(N)$.
\end{thm}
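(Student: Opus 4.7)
My plan is to induct on the nilpotency class $c$ of $N$, with Proposition \ref{prop:part1 abelian group prop} and Lemma \ref{lem: tensor power maps} as the main inputs. The case $c=1$ is immediate: then $\Gamma_2(N)=1$, so the hypothesis forces $\sigma(N)=N$, and Proposition \ref{prop:part1 abelian group prop} applied with $A=B=N$ finishes the argument.

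For $c\geqslant 2$ I would work with the central extension $1\to\Gamma_c(N)\to N\to N/\Gamma_c(N)\to 1$. The subgroup $\Gamma_c(N)$ is fully invariant (being the isolator of the fully invariant $\gamma_c(N)$), so $\sigma$ restricts to an endomorphism of $\Gamma_c(N)$ and descends to an endomorphism $\bar\sigma$ of the quotient. This quotient is torsion-free nilpotent of class strictly less than $c$ and of finite rank. Since the image of $\Gamma_2(N)$ in the quotient lies inside $\Gamma_2(N/\Gamma_c(N))$, the surjectivity hypothesis on $\sigma$ transfers, and the inductive hypothesis yields $\bar\sigma\in\Aut(N/\Gamma_c(N))$. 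Granted also that $\sigma|_{\Gamma_c(N)}\in\Aut(\Gamma_c(N))$, a routine diagram chase on the two resulting rows then produces $\sigma\in\Aut(N)$.

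The heart of the proof is thus to verify that $\sigma|_{\Gamma_c(N)}$ is an automorphism. The induced map on $N/\Gamma_2(N)$ is a surjective endomorphism of a torsion-free abelian group of finite rank, hence already an automorphism by the base case, so its $c$-th tensor power is too. Since $N$ has class $c$ we have $\Gamma_{c+1}(N)=1$, and hence the map $\alpha_c$ of Lemma \ref{lem: tensor power maps} takes values in $\Gamma_c(N)$ with image exactly $\gamma_c(N)$; the $\Z[\sigma]$-equivariance of $\alpha_c$ then gives $\sigma(\gamma_c(N))=\gamma_c(N)$. Now applying Proposition \ref{prop:part1 abelian group prop} with $A=\Gamma_c(N)$ and $B=\gamma_c(N)$, which share the same rank precisely because $\Gamma_c(N)$ is the isolator of $\gamma_c(N)$, concludes this step.

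The principal obstacle is the mismatch between $\gamma_c(N)$ (on which commutator identities supply $\sigma$-equivariant control via Lemma \ref{lem: tensor power maps}) and $\Gamma_c(N)$ (the subgroup we actually need to descend through in the extension argument). Proposition \ref{prop:part1 abelian group prop} is the tool designed exactly to bridge this gap, and it relies essentially on the torsion-free finite-rank hypothesis.
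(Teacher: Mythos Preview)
Your proof is correct and follows essentially the same route as the paper: induct on the class, use $\alpha_c$ together with the automorphism on $N/\Gamma_2(N)$ to get $\sigma(\gamma_c(N))=\gamma_c(N)$, and then invoke Proposition~\ref{prop:part1 abelian group prop} to upgrade this to $\sigma(\Gamma_c(N))=\Gamma_c(N)$. The only cosmetic difference is that the paper first records injectivity of $\sigma$ via a Hirsch length argument and finishes with $N=\sigma(N)\Gamma_c(N)=\sigma(N)\sigma(\Gamma_c(N))=\sigma(N)$, whereas you package the same conclusion as a short five lemma on the extension $1\to\Gamma_c(N)\to N\to N/\Gamma_c(N)\to 1$.
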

\begin{proof}
Note firstly that $\sigma$ is necessarily injective by a Hirsch length argument. 
We induct on the class $c$ of $N$, the case $c=1$ being clear.
For $N$ of class $c>1$, a further Hirsch length argument shows that the induced map $\bar\sigma\colon N/\Gamma_c(N)\ra N/\Gamma_c(N)$ is injective.
By induction we deduce that $N=\sigma(N)\Gamma_c(N)$.

Consider now the $\Z[\sigma]$-module homomorphism $\alpha_c$ as in lemma (\ref{lem: tensor power maps}).
Since by hypothesis $\sigma$ acts as an automorphism on $N/\Gamma_2(N)$, the image $\gamma_c(N)$ of $\alpha_c$ is stabilised by $\sigma$.
Proposition (\ref{prop:part1 abelian group prop}) applies and we deduce that $\sigma(\Gamma_c(N))=\Gamma_c(N)$.
Finally $N=\sigma(N)\Gamma_c(N)=\sigma(N)\sigma(\Gamma_c(N))=\sigma(N)$, as desired.
\end{proof}
\section{Detecting Surjectivity via the Centre}
We introduce first the main tool required to obtain our second result.
For a group $G$ and $i\geqslant0$, denote the $i$-th term of the upper central series of $G$ by $Z^{i}(G)$.
We often simply write $Z^{1}(G)=Z(G)$.
\begin{lem}[{\cite[1.2.19]{LR}}]\label{lem:upper central hom map}
Let $N$ be a nilpotent group and $\omega$ an automorphism of $N$.
Then for each $i>0$ there is an $\omega$-equivariant split monomorphism of abelian groups
\begin{align*}
\beta_i\colon Z^{i+1}(N)/Z^{i}(N)&\lra\Hom\big(N/N',Z^{i}(N)/Z^{i-1}(N)\big)\\
\bar w&\longmapsto\big(\bar x\mapsto\overline{[w,x]}\big),
\end{align*}
where $\omega$ acts on the right hand side by sending some $\theta$ to the map $\theta^{\omega}$, which for $\bar x$ in $N/N'$ is defined by
$$\theta^{\omega}(\bar x)=\omega\theta\big(\overline{\omega^{-1}(x)}\big).$$
\end{lem}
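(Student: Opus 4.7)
The plan is to verify a sequence of properties of $\beta_i$ in turn, leaving the splitting claim to the end since I expect it to be the main subtle point.

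First, fix $w\in Z^{i+1}(N)$ and verify that the assignment $x\mapsto\overline{[w,x]}$ defines a homomorphism $N/N'\to Z^i(N)/Z^{i-1}(N)$. That $[w,x]\in Z^i(N)$ is by definition of the upper central series. For additivity modulo $Z^{i-1}(N)$ one uses the identity $[w,xy]=[w,y][w,x]^y$ together with $[w,x]^y=[w,x][[w,x],y]$: since $[w,x]\in Z^i(N)$ we have $[[w,x],y]\in[Z^i(N),N]\leqslant Z^{i-1}(N)$. Factoring through $N'$ is a three subgroup lemma argument: with $A=B=N$, $C=Z^{i+1}(N)$, and ambient normal subgroup $Z^{i-1}(N)$, one has $[N,Z^{i+1}(N),N]\leqslant[Z^i(N),N]\leqslant Z^{i-1}(N)$ and symmetrically $[Z^{i+1}(N),N,N]\leqslant Z^{i-1}(N)$, whence $[N,N,Z^{i+1}(N)]\leqslant Z^{i-1}(N)$, which is exactly what is needed.

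Next I would check that $\beta_i$ itself is a homomorphism from $Z^{i+1}(N)/Z^i(N)$. Additivity in $w$ comes from $[ww',x]=[w,x]^{w'}[w',x]$ together with $[[w,x],w']\in[Z^i(N),Z^{i+1}(N)]\leqslant Z^{i-1}(N)$; and $Z^i(N)$ lies in the kernel since $[Z^i(N),N]\leqslant Z^{i-1}(N)$. The converse of this last statement gives injectivity: if $[w,N]\leqslant Z^{i-1}(N)$ then $wZ^{i-1}(N)$ is central in $N/Z^{i-1}(N)$, forcing $w\in Z^i(N)$.

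For $\omega$-equivariance one computes directly: $\beta_i(\overline{\omega(w)})(\bar x)=\overline{[\omega(w),x]}$, whereas
$$\beta_i(\bar w)^\omega(\bar x)=\omega\bigl(\overline{[w,\omega^{-1}(x)]}\bigr)=\overline{[\omega(w),\omega\omega^{-1}(x)]}=\overline{[\omega(w),x]},$$
using only that $\omega$ is a homomorphism preserving the upper central series.

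The main obstacle I anticipate is the splitting claim. Nothing in the steps above produces a left inverse, and in general a subgroup of $\Hom(N/N',Z^i(N)/Z^{i-1}(N))$ need not be a direct summand of the ambient abelian group. For this I would consult \cite[1.2.19]{LR} to identify the required construction, likely exploiting the specific nilpotent structure or passing through the associated graded Lie ring, and then verify that the splitting can be chosen $\omega$-equivariantly, which ought to be automatic provided the construction is natural in $N$.
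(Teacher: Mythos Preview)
The paper does not give its own proof of this lemma at all: it is stated with a citation to \cite[1.2.19]{LR} and then used. So there is nothing to compare your argument against except the bare citation.

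That said, your verification is correct and standard for the monomorphism and equivariance claims. The well-definedness arguments via commutator identities and the three-subgroup lemma are exactly what one expects, as is the injectivity argument and the direct equivariance computation.

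On the splitting: you are right to flag this as the one nontrivial point, and right that nothing in the preceding steps produces a retraction. But note that the paper never actually uses the splitting. In Proposition~\ref{prop:the sequence proposition} only the equivariance of $\delta^{-1}\circ\gamma\circ\beta_1$ is needed, and in the proof of Theorem~\ref{thm: central main theorem} one takes the $\Q$-span of the image of $\beta_1$ inside $\Hom(R/R',Z(R))$, which requires only that $\beta_1$ be an equivariant monomorphism. So for the purposes of this paper you may safely omit the splitting claim entirely; if you want to include it for completeness, consulting the cited reference is the right move, but be aware that an $\omega$-equivariant splitting is not needed downstream.
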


Now let $N$ be a torsion-free nilpotent group of finite rank and $\sigma$ an injective endomorphism of $N$.
Furthermore denote by $R$ the Mal'cev completion of $N$ and $\sigma_\times$ the induced automorphism of $R$.
Note that the upper central series of $N$ is indeed preserved by $\sigma$, a consequence for example of the elementary fact that for each $i\geqslant0$, we have $Z^{i}(N)=Z^{i}(R)\cap N$, so that
\begin{equation*}
\sigma\big(Z^{i}(N)\big)=\sigma_\times\big( Z^{i}(R)\big)\cap \sigma(N)=Z^{i}(R)\cap \sigma(N)\leqslant Z^{i}(N).
\end{equation*}
Together with the map $\beta_1$ described in lemma (\ref{lem:upper central hom map}), the natural maps $Z(N)\ra Z(R)$ and $N/N'\ra R/R'$ induce maps which fit together as below.
\begin{equation}\label{eqn: the sequence}
\begin{gathered}
\xymatrixcolsep{2pc}
\xymatrixrowsep{2pc}
\xymatrix{
Z^{2}(N)/Z^{1}(N)\ar@{->}[d]^-{\beta_1}\\
\Hom(N/N',Z(N))\ar@{->}[r]^-{\gamma}&\Hom(N/N',Z(R))\ar@{<-}[d]^-{\delta}\\
&\Hom(R/R',Z(R))
}
\end{gathered}
\end{equation}
The required properties of this sequence are detailed in the following proposition.
\begin{prop}\label{prop:the sequence proposition}
Let $N$, $R$, $\sigma$, $\sigma_\times$ be as above and consider diagram (\ref{eqn: the sequence}).
We claim the following.
\begin{enumerate}
\item $\gamma$ is injective.
\item $\delta$ is an isomorphism.
\item $\delta^{-1}\circ\gamma\circ\beta_1$ is a $\Z[\sigma_\times]$-module map.
\item $\delta^{-1}\circ\gamma$ is a $\Z[\sigma_\times^{-1}]$-module map, provided that $\sigma(Z(N))=Z(N)$.
\end{enumerate}
\end{prop}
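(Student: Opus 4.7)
The plan is to handle the four claims in order, treating the underlying embeddings first and then the module-theoretic compatibilities in (3) and (4).

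For (1), $\gamma$ is postcomposition with the inclusion $Z(N) = Z(R) \cap N \hookrightarrow Z(R)$, which is a monomorphism, so $\gamma$ itself is injective. For (2), I would use that $R$ is torsion-free radicable nilpotent of finite rank, which forces $R'$ to be radicable and $R' \cap N = \Gamma_2(N)$, so $R/R' \cong (N/\Gamma_2(N))\otimes\Q$ is a finite-dimensional $\Q$-vector space; likewise $Z(R) = Z(N)\otimes\Q$. Any homomorphism $N/N' \to Z(R)$ therefore kills the torsion subgroup $\Gamma_2(N)/N'$ and extends uniquely $\Q$-linearly to $R/R'$, providing the inverse to $\delta$.

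For (3), I would unpack the composition. For $w \in Z^2(N)$ and $x \in N$, $\beta_1(\bar w)(\bar x) = \overline{[w,x]} \in Z(N)$; applying $\gamma$ reinterprets this in $Z(R)$, and its unique $\Q$-linear extension to $R/R'$ is the map $\bar r \mapsto \overline{[w,r]}$, which is exactly $\beta_1(\bar w)$ when lemma (\ref{lem:upper central hom map}) is applied to $R$ in place of $N$. Hence $\delta^{-1}\circ\gamma\circ\beta_1$ factors as that $\beta_1$-for-$R$ composed with the canonical embedding $Z^2(N)/Z(N) \hookrightarrow Z^2(R)/Z(R)$. This embedding is $\sigma_\times$-equivariant because $\sigma = \sigma_\times|_N$ preserves the upper central series of $N$, and combining with the $\sigma_\times$-equivariance of $\beta_1$ for $R$ from the lemma yields the claim.

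For (4), the hypothesis $\sigma(Z(N)) = Z(N)$ means $\sigma$ restricts to an automorphism of $Z(N)$ whose inverse coincides with $\sigma_\times^{-1}|_{Z(N)}$. The formula $\theta^{\sigma_\times^{-1}}(\bar x) = \sigma_\times^{-1}\theta(\overline{\sigma_\times(x)})$ of lemma (\ref{lem:upper central hom map}) therefore equips $\Hom(N/N', Z(N))$ with a well-defined $\Z[\sigma_\times^{-1}]$-module structure, since for $x \in N$ it only invokes $\sigma$ on $N/N'$ and $\sigma_\times^{-1}$ on $Z(N)$ --- no inverse of $\sigma$ on $N/N'$ is required. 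Writing $\theta^R := \delta^{-1}\gamma(\theta)$, a direct calculation shows $(\theta^R)^{\sigma_\times^{-1}}$ and the unique $\Q$-linear extension of $\theta^{\sigma_\times^{-1}}$ agree on $N/N'$, hence everywhere on $R/R'$ by uniqueness. The main obstacle I anticipate is precisely this bookkeeping: checking that the $\omega$-action formula still makes sense when $\omega = \sigma_\times^{-1}$ using only the available pieces of invertibility, and that it commutes with the divisible extension.
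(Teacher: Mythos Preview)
Your argument is correct. For parts (1), (2), and (4) you proceed essentially as the paper does, supplying somewhat more detail. For part (3), however, you take a genuinely different and more conceptual route: you identify $\delta^{-1}\circ\gamma\circ\beta_1$ as the composite of the canonical embedding $Z^{2}(N)/Z(N)\hookrightarrow Z^{2}(R)/Z(R)$ with the map $\beta_1$ for $R$, and then invoke the $\sigma_\times$-equivariance of each factor separately (the latter coming directly from lemma~(\ref{lem:upper central hom map}) applied with $\omega=\sigma_\times\in\Aut(R)$). The paper instead verifies equivariance by a direct elementwise computation: for $w\in Z^{2}(N)$ and $x\in N$ it must evaluate $\bigl((\delta^{-1}\circ\gamma\circ\beta_1)(wZ)\bigr)(\sigma_\times^{-1}(x)R')$, and since $\sigma_\times^{-1}(x)$ need not lie in $N$ it passes to an $l$-th power that does, then cancels $l$ using the unique divisibility of $Z(R)$. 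Your factorisation sidesteps this manoeuvre entirely; the only point to make explicit is that $\bar r\mapsto\overline{[w,r]}$ really is the $\Q$-linear extension produced by $\delta^{-1}$, which holds because any group homomorphism between $\Q$-vector spaces is automatically $\Q$-linear. Both arguments are short, but yours explains more transparently \emph{why} the equivariance holds.
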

\begin{proof}
The first claim is an immediate consequence of the injectivity of $Z(N)\ra Z(R)$.

That $\delta$ is an isomorphism follows from the fact that $Z(R)$ is a $\Q$-vector space and the map $N/N'\ra R/R'$ is naturally isomorphic to tensoring with $\Q$.


For the third part, note that $\sigma_\times$ has a well defined action (as specified in lemma (\ref{lem:upper central hom map})) on $\Hom(R/R',Z(R))$ since it is an automorphism of $R$.
In order to show it is equivariant, let $wZ$ be an element of $Z^{2}(N)/Z^{1}(N)$.
It is required to show that we have an equality of maps
\begin{align*}
\left(\delta^{-1}\circ\gamma\circ\beta_1\right)(\sigma(w)Z)=\bigg(\left(\delta^{-1}\circ\gamma\circ\beta_1\right)(wZ)\bigg)^{\!\sigma_\times}\!\!\colon R/R'\lra Z(R).
\end{align*}
Since $\delta$ is an isomorphism it suffices to check that these are equal after precomposing with the natural map $N/N'\ra R/R'$.
It thus suffices to show that if $x\in N$ that
\begin{align*}
\bigg(\left(\delta^{-1}\circ\gamma\circ\beta_1\right)(\sigma(w)Z)\bigg)(xR')= \sigma_\times\Bigg(\bigg(\left(\delta^{-1}\circ\gamma\circ\beta_1\right)(wZ)\bigg)(\sigma_\times^{-1}(x)R')\Bigg).
\end{align*}
The left hand side now immediately reduces to $[\sigma(w),x]$, so that this equality holds if and only if
\begin{align}
\sigma_\times^{-1}[\sigma(w),x]=\bigg(\left(\delta^{-1}\circ\gamma\circ\beta_1\right)(wZ)\bigg)(\sigma_\times^{-1}(x)R').\label{eqn:upper central hideosity}
\end{align}
Note that $\sigma_\times^{-1}(x)$ may not lie in $N$, but there certainly exists some $l\geqslant1$ for which $\sigma_\times^{-1}(x)^{l}\in N$.
Taking the $l$-th multiple in $Z(R)$ of the right hand side of equation (\ref{eqn:upper central hideosity}) we thus see that
\begin{align*}
l\cdot\bigg(\left(\delta^{-1}\circ\gamma\circ\beta_1\right)(wZ)\bigg)(\sigma_\times^{-1}(x)R')&=\bigg(\left(\delta^{-1}\circ\gamma\circ\beta_1\right)(wZ)\bigg)(\sigma_\times^{-1}(x)^{l}R')\\
&=[w,\sigma_\times^{-1}(x)^{l}]\\
&=l\cdot[w,\sigma_\times^{-1}(x)].
\end{align*}
Appealing to the unique divisibility of $Z(R)$ we deduce that the right hand side of equation (\ref{eqn:upper central hideosity}) is precisely $[w,\sigma_\times^{-1}(x)]=\sigma_\times^{-1}[\sigma(w),x]$, as desired.

For the final part, note that the fact that the abelian group $\Hom(N/N',Z(N))$ is a $\Z[\sigma_\times^{-1}]$-module is where the additional hypothesis on $\sigma$ plays its role.
The proof of equivariance is both similar to and simpler than the one above, so we omit it.
\end{proof}

In all that follows, $\pi$ will denote a (possibly empty) set of prime numbers.
We begin by recalling the following standard notions.
A $\pi$-number is a rational integer with all prime divisors contained in $\pi$, and we notate $\Z[1/\pi]:=\{\frac{m}{n}\colon m\in\Z,n$ a non-zero $\pi$-number$\}$.
Finally a $\pi$-unit is a unit in this ring.
If $\pi=\{p\}$, we notate $\Z[1/\pi]=\Z[1/p]$ as usual.
We now introduce the notion of $\pi$-like morphisms.

\begin{defn}
Let $V$ be a rational vector space of dimension $n<\infty$, and let $\nu$ be an automorphism of $V$.
We say that $\nu$ is $\pi$-like if one of the following equivalent conditions hold.
\begin{itemize}
\item The coefficients of the characteristic polynomial of $\nu$ lie in $\Z[1/\pi]$.
\item There exists some $W\leqslant V$ preserved by $\nu$ with $W\cong\Z[1/\pi]^{n}$.
\end{itemize}
Now let $\sigma$ an injective endomorphism of a torsion-free nilpotent group $N$ of finite rank.
We say that $\sigma$ is $\pi$-like if the induced automorphism of the associated rational Lie algebra of $N$ is $\pi$-like.
\end{defn}
The equivalence of these two conditions is standard linear algebra.
In case $\pi=\emptyset$, this is the notion of integer-like automorphisms, as considered in \cite{Dekimpe Dere}.
This is a particularly well-behaved class of endomorphisms, as we see next.

\begin{prop}\label{prop:pi like is well behaved and easy to detect}
Let $N$ be torsion-free nilpotent of finite rank and $\sigma$ an injective endomorphism of $N$.
Then the following are equivalent.
\begin{enumerate}
\item $\sigma$ is $\pi$-like.
\item For any central series of $N$ preserved by $\sigma$ with torsion-free sections, the action of $\sigma$ on each section is $\pi$-like.
\item The induced map on $N/\Gamma_2(N)$ is $\pi$-like.
\end{enumerate}
\end{prop}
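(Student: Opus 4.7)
The plan is to translate each of the three conditions into an assertion about the characteristic polynomial of $\sigma_\times$ acting on certain finite-dimensional $\Q$-vector spaces, and then to combine two standard linear-algebra facts in concert: (i) if $\nu$ is an endomorphism of a $\Q$-vector space $V$ preserving a subspace $W$, then $\chi_\nu=\chi_{\nu\vert W}\cdot\chi_{\nu\vert V/W}$; and (ii) the Gauss lemma for the UFD $\Z[1/\pi]$, which says that if $f=gh$ in $\Q[x]$ with $f,g,h$ monic and $f\in\Z[1/\pi][x]$, then necessarily $g,h\in\Z[1/\pi][x]$. Together these allow one to move freely between characteristic polynomials on $V$, on $W$ and on $V/W$ while preserving the property of having coefficients in $\Z[1/\pi]$.

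For (1) $\Leftrightarrow$ (2), a central series of $N$ preserved by $\sigma$ with torsion-free sections corresponds, under the Mal'cev correspondence, to a $\sigma_\times$-invariant filtration of the rational Lie algebra $\mathfrak{r}$ of $R$ by ideals, whose successive quotients are $\Q$-vector spaces canonically isomorphic to the rationalisations of the original sections. The characteristic polynomial of $\sigma_\times$ on $\mathfrak{r}$ is the product of its characteristic polynomials on these successive quotients, so an inductive application of Gauss supplies the equivalence.

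(2) $\Rightarrow$ (3) is the instance of (2) applied to the isolator series $N\geqslant\Gamma_2(N)\geqslant\Gamma_3(N)\geqslant\cdots$, which has torsion-free sections by definition. Conversely, for (3) $\Rightarrow$ (1) set $V:=N/\Gamma_2(N)\otimes\Q$ and $V_i:=\Gamma_i(N)/\Gamma_{i+1}(N)\otimes\Q$. Tensoring lemma \ref{lem: tensor power maps} with $\Q$, and using that $\gamma_i(N)\Gamma_{i+1}(N)$ and $\Gamma_i(N)$ have the same rank, yields a $\Q[\sigma_\times]$-linear surjection $V^{\otimes i}\twoheadrightarrow V_i$. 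Now if $\sigma_\times$ is $\pi$-like on $V$, the eigenvalues of $\sigma_\times^{\otimes i}$ on $V^{\otimes i}$ are $i$-fold products of eigenvalues of $\sigma_\times$ on $V$, and the coefficients of $\chi_{\sigma_\times^{\otimes i}}$ are symmetric polynomials in those products, hence expressible by the fundamental theorem of symmetric functions as $\Z$-polynomials in the coefficients of $\chi_{\sigma_\times\vert V}$. Consequently $\chi_{\sigma_\times^{\otimes i}}\in\Z[1/\pi][x]$, and Gauss transfers this to the quotient $V_i$; by (1) $\Leftrightarrow$ (2) applied to the isolator series one concludes that $\sigma$ is $\pi$-like.

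The main obstacle I anticipate is the bookkeeping in (1) $\Leftrightarrow$ (2): one must verify carefully that a central series of $N$ with torsion-free sections really does rationalise to a filtration of $\mathfrak{r}$ whose successive quotients are the tensor-with-$\Q$ of the original sections and on which $\sigma_\times$ acts with the expected characteristic polynomial, so that the product decomposition of $\chi_{\sigma_\times\vert\mathfrak{r}}$ is legitimate. Once this dictionary between $N$ and $\mathfrak{r}$ is firmly in place, all remaining steps reduce to routine manipulations with monic polynomials over $\Z[1/\pi]$.
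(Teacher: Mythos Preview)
Your proof is correct and follows essentially the same approach as the paper: Mal'cev-complete to linearise, use the multiplicativity of characteristic polynomials on filtrations together with Gauss' Lemma for $\Z[1/\pi]$, and for $(3)\Rightarrow(1)$ exploit the tensor-power maps of lemma~\ref{lem: tensor power maps}. The only cosmetic difference is that for $(3)\Rightarrow(1)$ the paper pushes forward a maximal-rank $\Z[1/\pi]$-lattice through the tensor maps to witness $\pi$-likeness on each section directly, whereas you compute the characteristic polynomial on $V^{\otimes i}$ via symmetric functions and then descend to the quotient $V_i$ by Gauss; these are equivalent manoeuvres.
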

\begin{proof}
$1\implies2$: Mal'cev complete at the central series to obtain a decomposition of the associated rational Lie algebra $V$ as $0=V_0\leqslant V_1\leqslant\cdots\leqslant V_r=V$ with $\sigma_\times(V_i)=V_i$ for each $i$, where $\sigma_\times$ is the induced automorphism.
Let the characeristic polynomial of the induced automorphism on the section $V_{i}/V_{i-1}$ be $f_i$ and the characteristic polynomial of $\sigma_\times$ be $f$.
We obtain a factorisation $f=f_1\cdots f_n$.
By hypothesis $f$ has coefficients in $\Z[1/\pi]$, and Gauss' Lemma implies that each $f_i$ has coefficients in this ring too.

$2\implies 3$: Trivial.

$3\implies 1$: Select a subgroup isomorphic to a free $\Z[1/\pi]$-module of maximal rank in $N/\Gamma_2(N)$ preserved by $\sigma$.
The image of this subgroup under the tensor power maps described in lemma (\ref{lem: tensor power maps}) show that the action on each section of the isolated central series is $\pi$-like.
Upon Mal'cev completing, this factorises the characteristic polynomial of the induced automorphism $\sigma_\times$ into polynomials with coefficients in $\Z[1/\pi]$.
Conclude.
\end{proof}

A trivial consequence of the above proposition is the following, which we state separately for later clarity.
\begin{cor}\label{cor:gauss lemma}
Let $V$ be a finite dimensional $\Q$-vector space and $\omega$ a $\pi$-like automorphism of $V$.
Suppose that $U$ is a subspace of $V$ stabilised by $\omega$.
Then $\restr{\omega} U$ is $\pi$-like.
\end{cor}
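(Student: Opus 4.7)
The plan is to reduce immediately to the Gauss's Lemma argument already used in the $1 \implies 2$ part of the preceding proposition. Since $\omega$ stabilises $U$, it induces an automorphism $\bar\omega$ on the quotient $V/U$, and the characteristic polynomial $f$ of $\omega$ factors in $\Q[X]$ as $f = f_U \cdot f_{V/U}$, where $f_U$ and $f_{V/U}$ are the characteristic polynomials of $\restr{\omega}{U}$ and $\bar\omega$ respectively. All three polynomials are monic.

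By hypothesis $f \in \Z[1/\pi][X]$, and since $\Z[1/\pi]$ is a UFD (being a localisation of $\Z$), Gauss's Lemma applies: a monic polynomial in $\Z[1/\pi][X]$ which splits as a product of monic polynomials in $\Q[X]$ already splits over $\Z[1/\pi][X]$. Hence $f_U \in \Z[1/\pi][X]$, so $\restr{\omega}{U}$ is $\pi$-like by definition.

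There is no genuine obstacle; the same mechanism was already invoked in the proof of Proposition (\ref{prop:pi like is well behaved and easy to detect}). Alternatively, one could deduce the statement directly from that proposition by choosing a $\Z[1/\pi]$-lattice $L \leqslant V$ preserved by $\omega$ (using the second characterisation of $\pi$-like), regarding $L$ as an abelian, hence class-one, nilpotent group, and applying the implication $1 \implies 2$ to the $\omega$-preserved series $0 \leqslant U \cap L \leqslant L$. The direct polynomial factorisation argument, however, is the cleanest route.
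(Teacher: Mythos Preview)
Your proposal is correct and matches the paper's intent: the paper gives no explicit proof, stating only that the corollary is ``a trivial consequence of the above proposition,'' i.e.\ precisely your alternative route via Proposition~\ref{prop:pi like is well behaved and easy to detect}. Your direct Gauss's Lemma argument is exactly the mechanism underlying the $1\Rightarrow2$ step of that proposition, so both of your routes align with the paper.
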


%
We may now show the following.
\begin{thm}\label{thm: central main theorem}
Let $N$ be a $\pi$-divisible torsion-free nilpotent group of finite rank, and let $\sigma$ be a $\pi$-like endomorphism of $N$ such that $\det\restr{\sigma}{Z(N)}$ is a $\pi$-unit.
Then $\sigma\in\Aut(N)$.
\end{thm}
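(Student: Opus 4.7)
The plan is to induct on the nilpotency class $c$ of $N$.

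\emph{Base case} ($c=1$): Here $N$ is $\pi$-divisible, torsion-free abelian of rank $n$, and $Z(N)=N$. Since $\sigma$ is $\pi$-like, the Mal'cev completion $V=N\otimes\Q$ admits a $\sigma_\times$-invariant subgroup $M_0\cong\Z[1/\pi]^n$ of full rank; after scaling by a positive integer I may assume $M_0\leqslant N$. The hypothesis $\det\sigma|_{Z(N)}\in\Z[1/\pi]^*$ makes $\sigma|_{M_0}$ invertible over $\Z[1/\pi]$, so $\sigma$ stabilises $M_0$, and proposition (\ref{prop:part1 abelian group prop}) applied with $B=M_0$ yields $\sigma\in\Aut(N)$.

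\emph{Inductive step} ($c\geqslant 2$): I first apply the base case to $Z(N)$, which is $\pi$-divisible torsion-free abelian of finite rank, with $\sigma|_{Z(N)}$ $\pi$-like by corollary (\ref{cor:gauss lemma}) and of $\pi$-unit determinant by hypothesis; this yields $\sigma(Z(N))=Z(N)$. The induced endomorphism $\bar\sigma$ of $N/Z(N)$ is then injective, $\pi$-like (proposition (\ref{prop:pi like is well behaved and easy to detect})(2) and polynomial division in $\Z[1/\pi][X]$), acting on a $\pi$-divisible torsion-free nilpotent group of class $c-1$ and finite rank. If I can show $\det\bar\sigma|_{Z^2(N)/Z(N)}$ is a $\pi$-unit, then the inductive hypothesis applied to $N/Z(N)$ gives $\bar\sigma\in\Aut(N/Z(N))$, and a direct surjectivity chase through $1\to Z(N)\to N\to N/Z(N)\to 1$ (using injectivity of $\sigma$ and centrality of $Z(N)$) concludes $\sigma\in\Aut(N)$.

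\emph{Main obstacle}: proving $\det\bar\sigma|_{Z^2/Z}$ is a $\pi$-unit. I would argue via generalised eigenvalues over $\overline{\Q}$. Proposition (\ref{prop:the sequence proposition})(3) gives a $\sigma_\times$-equivariant injection $\phi=\delta^{-1}\gamma\beta_1\colon Z^2(N)/Z(N)\hookrightarrow\Hom(R/R',Z(R))$, which upon rationalising is the map $\beta_1^R$ of lemma (\ref{lem:upper central hom map}) applied to the automorphism $\sigma_\times$ of $R$; hence $\det\bar\sigma|_{Z^2/Z}$ equals the product of the generalised eigenvalues of $\sigma_\times|_{Z^2(R)/Z(R)}$ over $\overline{\Q}$. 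Take such an eigenvalue $\eta$, and lift a generalised eigenvector to $\tilde w\in Z^2(R)\setminus Z(R)$ (possible since the surjection $Z^2(R)\to Z^2(R)/Z(R)$ sends the generalised $\eta$-eigenspace onto itself). As $\tilde w\notin Z(R)$, some $x\in R$ -- which I may take to lie in a generalised $\lambda$-eigenspace of $\sigma_\times$ -- has $[\tilde w,x]\neq 1$. Since $\sigma_\times$ commutes with the bracket, the bracket of generalised eigenspaces lies in the generalised eigenspace of the product; so $[\tilde w,x]$ lies in the generalised $\eta\lambda$-eigenspace, and since $\tilde w\in Z^2(R)$ it lies in $Z(R)$ too. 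So $\eta\lambda$ is a root of the characteristic polynomial of $\sigma_\times|_{Z(R)}$, a polynomial in $\Z[1/\pi][X]$ whose constant term $\pm\det\sigma|_{Z(N)}$ is a $\pi$-unit. Hence every such root is a unit in the integral closure $\overline{\Z[1/\pi]}$, and as this ring is a domain the factor $\eta$ of the unit $\eta\lambda$ is itself a unit. Taking the product over all such $\eta$, $\det\bar\sigma|_{Z^2/Z}$ is a unit in $\overline{\Z[1/\pi]}$; since this determinant lies in $\Z[1/\pi]$ and $\Z[1/\pi]$ is integrally closed in $\Q$, it is a $\pi$-unit.
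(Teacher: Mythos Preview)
Your argument is correct, and the overall architecture (induction on the class, reducing to the claim that $\det\bar\sigma|_{Z^{2}/Z}$ is a $\pi$-unit, then applying the inductive hypothesis to $N/Z(N)$) is the same as the paper's. The difference lies entirely in how that determinant claim is established.

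The paper stays over $\Q$ and argues via lattices: it uses part (4) of proposition~(\ref{prop:the sequence proposition}) (which needs $\sigma(Z(N))=Z(N)$, obtained from the base case as you also do) to reduce to showing that $\sigma_\times^{-1}$ acts $\pi$-like on $\Hom(N/\Gamma_2(N),Z(N))$, and exhibits an explicit full-rank free $\Z[1/\pi]$-submodule $H_{S,T}=\{f:f(S)\leqslant T\}$ preserved by $\sigma_\times^{-1}$. Then corollary~(\ref{cor:gauss lemma}) applied to the span of the image of $\beta_1$ gives that $(\det\bar\sigma)^{-1}\in\Z[1/\pi]$, and combined with $\det\bar\sigma\in\Z[1/\pi]$ from proposition~(\ref{prop:pi like is well behaved and easy to detect}) one is done. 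Your route instead passes to $\overline{\Q}$ and reads off the constraint eigenvalue-by-eigenvalue from the $\sigma_\times$-equivariant bilinear bracket $(Z^{2}(R)/Z(R))\times(R/R')\to Z(R)$: every eigenvalue $\eta$ on $Z^{2}(R)/Z(R)$ satisfies $\eta\lambda=\mu$ with $\mu$ an eigenvalue on $Z(R)$ (a unit in $\overline{\Z[1/\pi]}$ by the determinant hypothesis) and $\lambda$ an eigenvalue on $R/R'$. This bypasses proposition~(\ref{prop:the sequence proposition})(4) and the $H_{S,T}$ construction entirely, at the cost of a little algebraic integer theory.

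Two small points of exposition. First, your generalised eigenvectors $\tilde w$ and $x$ live not in $R$ but in the $\overline{\Q}$-points of the associated graded pieces (equivalently, in the Lie algebra base-changed to $\overline{\Q}$), with the bracket extended $\overline{\Q}$-bilinearly; you should say so. Second, ``as this ring is a domain'' is not the operative reason that $\eta$ is a unit: what you need is that both $\eta$ and $\lambda$ already lie in $\overline{\Z[1/\pi]}$ (they are roots of monic polynomials over $\Z[1/\pi]$ since $\sigma$ is $\pi$-like), and in any commutative ring a product $\eta\lambda$ being a unit forces each factor to be a unit.
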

\begin{proof}
The proof is by induction on the class $c$ of $N$.
If $c=1$ and $N$ has rank $n$, select $W\cong\Z[1/\pi]^{n}$ preserved by $\sigma$ and $W\leqslant A$.
Then the hypothesis on the determinant implies that $\sigma(W)=W$, and we may conclude with proposition (\ref{prop:part1 abelian group prop}).

Thus suppose $c>1$.
By considering $N/Z(N)$ of smaller class and centre $Z^{2}(N)/Z^{1}(N)$, it will suffice to show that the determinant of the induced map on $Z^{2}(N)/Z^{1}(N)$ is also a $\pi$-unit.
In order to proceed, we will show that the action of $\sigma_\times^{-1}$ on $\Hom(R/R',Z(R))$ is $\pi$-like.
This will follow from the final part of proposition (\ref{prop:the sequence proposition}) once we know that the action of $\sigma_\times^{-1}$ on $\Hom(N/N',Z(N))$ is $\pi$-like.
Since $Z(N)$ is torsion-free, it is equivalent to show this for $\Hom(N/\Gamma_2(N),Z(N))$.
Our hypothesis implies, by proposition (\ref{prop:pi like is well behaved and easy to detect}), that the action of $\sigma$ on $N/\Gamma_2(N)$ is $\pi$-like.
Moreover by our determinant hypothesis we may deduce that the action of $\sigma^{-1}$ on $Z(N)$ is $\pi$-like.
Thus there are subgroups $S,T$ of $N/\Gamma_2(N)$ and $Z(N)$ respectively, both of maximal Hirsch length and isomorphic to direct sums of copies of $\Z[1/\pi]$, with $\sigma(S)\leqslant S$ and $\sigma^{-1}(T)\leqslant T$.
Consider the subgroup $H_{S,T}\leqslant \Hom(N/\Gamma_2(N),Z(N))$, consisting by definition of those $f$ for which $f(S)\leqslant T$.
One may conclude by observing that $H_{S,T}$ is preserved by $\sigma^{-1}$, and that $H_{S,T}$ is of maximal Hirsch length and also isomorphic to a direct sum of copies of $\Z[1/\pi]$.

Now let $U$ be the $\Q$-span of the image of $Z^{2}(N)/Z^{1}(N)$ in $\Hom(R/R',Z(R))$ under $\delta^{-1}\circ\gamma\circ\beta_1$, in the notation of proposition (\ref{prop:the sequence proposition}).
Then in particular $\sigma_\times^{-1}(U)=U$ and corollary (\ref{cor:gauss lemma}) applies with $\omega=\sigma_\times^{-1}$: denoting the map $\sigma_\times$ induces on $U$ by $\bar\sigma_\times$, we deduce that $(\det\bar\sigma_\times)^{-1}$ is a $\pi$-number.
Moreover $\det\bar\sigma_\times$ is a $\pi$-number by proposition (\ref{prop:pi like is well behaved and easy to detect}).
Thus it is a $\pi$-unit, as desired.
\end{proof}

We now justify why we prove theorem (\ref{thm: central main theorem}) under the stated hypotheses.
\begin{ex}\label{ex:counterexamples}
We consider the nilpotent group
\begin{align*}
N:=
\begin{pmatrix}
1&\Z&\Z[1/2]\\
&1&\Z[1/2]\\
&&1
\end{pmatrix}\leqslant\GL_3(\Q),
&
\quad
Z(N)=
\begin{pmatrix}
1&0&\Z[1/2]\\
&1&0\\
&&1
\end{pmatrix},
\end{align*}
with endomorphisms
\begin{align*}
\varphi_1
\begin{pmatrix}
1&a&c\\
&1&b\\
&&1
\end{pmatrix}
=
\begin{pmatrix}
1&2a&c\\
&1&b/2\\
&&1
\end{pmatrix},
&\quad
\varphi_2
\begin{pmatrix}
1&a&c\\
&1&b\\
&&1
\end{pmatrix}
=
\begin{pmatrix}
1&2a&2c\\
&1&b\\
&&1
\end{pmatrix}.
\end{align*}
Both $\varphi_1$ and $\varphi_2$ are injective endomorphisms of $N$ which are not surjective, but induce isomorphisms on the centre.

The first example demonstrates that we must assume that the induced map on the torsion-free abelianisation is $\pi$-like.
(Note that $\pi=\emptyset$ here.)
However proposition (\ref{prop:pi like is well behaved and easy to detect}) shows that this already implies that the whole endomorphism is $\pi$-like, whence this hypothesis.
Meanwhile, the second endomorphism is $\pi$-like but the determinant on the centre is not a $\pi$-unit, whence our second hypothesis.
\end{ex}

It is shown independently in \cite{Farkas} and \cite{Wehrfritz I} that an endomorphism of a polycyclic-by-finite group which restricts to an isomorphism of the Zaleskii subgroup is an automorphism.
We show finally that this cannot hold in the finitely generated minimax setting.

\begin{ex}\label{ex:wehrfritz does not generalise}
Let $N$ be as above and set
\begin{align*}
t:=
\begin{pmatrix}
1/2&&\\
&1&\\
&&1
\end{pmatrix},
&\quad
x:=
\begin{pmatrix}
1&&\\
&1&\\
&&2
\end{pmatrix}.
\end{align*}
Then $G:=\langl N,x\rangl$ is finitely generated minimax with Fitting subgroup $N$. Conjugating by $t$ gives a proper inclusion $G^{t}<G$ and moreover induces $\varphi_2$ on $N$ above.
The Zaleskii subgroup here is precisely the centre of the Fitting subgroup, where our endomorphism induces an isomorphism.
\end{ex}

\end{document}